\theoremstyle{plain}
\newtheorem{theorem}{Theorem}[section]
\newtheorem{lemma}[theorem]{Lemma}
\theoremstyle{definition}
\newtheorem{defn}{Definition}[section]
\theoremstyle{remark}
\title[Quasi Yamabe solitons]{Some Characterizations of Quasi Yamabe solitons}
\author[A. A. Shaikh, and P. Mandal]{Absos Ali Shaikh$^{1*}$ and Prosenjit Mandal${^2}$}
\address{\noindent\newline $^{1,2}$Department of Mathematics,\newline The University of Burdwan,Golapbag,\newline Purba Bardhaman-713101,\newline West Bengal, India}
\email{$^1$aask2003@yahoo.co.in, aashaikh@math.buruniv.ac.in}
\email{$^2$prosenjitmandal235@gmail.com}
\begin{document}
\begin{abstract}
In this article, we have proved some results in connection with the potential vector field having finite global norm in quasi Yamabe soliton. We have derived some criteria in particular for the potential vector field on the non-positive Ricci curvature of the quasi Yamabe soliton. Also, a necessary condition for a compact quasi Yamabe soliton has been formulated. We further showed that if the potential vector field has a finite global norm in a complete non-trivial, non-compact quasi Yamabe soliton with finite volume, then the scalar curvature becomes constant and the soliton reduces to a Yamabe soliton.
\end{abstract}
\noindent\footnotetext{ $^*$ Corresponding author.\\
 $\mathbf{2010}$\hspace{5pt}Mathematics\; Subject\; Classification: 53C20; 53C21; 53C44.\\ 
{Key words and phrases: quasi Yamabe soliton, scalar curvature, Global finite norm, Riemannian manifold.} }
\maketitle

\section{Introduction and preliminaries}
An $n$-dimensional Riemannian manifold $(M,g)$ is said to be a quasi Yamabe soliton \cite{CD2018} if it possesses a smooth vector field $Y$ satisfying
\begin{equation}\label{y1}
2(R-\rho) g+2 \beta Y^{\flat}\otimes Y^{\flat}=\pounds_Yg,
\end{equation}
where $R$ denotes the scalar curvature of the metric $g$, $Y^{\flat}$ is the dual $1$-form of $Y$, $\pounds_Yg$ represents Lie derivative of the metric tensor $g$ in the direction of the vector field $Y$, $\rho$ is a constant and $\beta$ is a function on $M$. In the above equation ($\ref{y1}$) if we put $\beta=0$, then we get the following equation
\begin{equation*}
2(R-\rho) g=\pounds_Yg,
\end{equation*} 
and in this case $(M,g)$ is called a Yamabe soliton \cite{SCM21}. The Yamabe solitons are the special solutions of Yamabe flow, $\frac{\partial g}{\partial t}=-Rg$, which was introduced by Hamilton \cite{HA82, HA89}. If $\rho>0$, $\rho=0$ or $\rho<0$, then the Yamabe soliton is called shrinking, steady, or expanding, respectively.\\
In the local coordinates system the equation (\ref{y1}) can also be expressed in the following form
\begin{equation}\label{y2}
\nabla_iY_j+\nabla_jY_i=2(R-\rho) g_{ij} + 2 \beta Y_i Y_j.
\end{equation}
A Riemannian manifold $(M,g)$ is called a generalized quasi Yamabe gradient soliton \cite{NO16}, if $Y$ is gradient of some smooth function $f\in C^\infty(M)$ and in this case (\ref{y1}) takes the form 
\begin{equation}\label{e3}
\nabla_i\nabla_jf=(R-\rho)g_{ij} + \beta\nabla_if \nabla_jf.
\end{equation}
In the above equation $(\ref{e3})$ if we suppose $\beta$ as a constant, then in this case $(M,g)$ is called a quasi Yamabe gradient soliton \cite{HH14}.\\
\par In \cite{HH14}, Huang and Li proved that a compact quasi Yamabe gradient soliton is of constant scalar curvature. It has been shown in \cite{Wang13} that the scalar curvature of a non-compact quasi Yamabe gradient soliton may not be constant, and also showed that such a soliton has a warped product structure with warping function as the potential of the soliton. Later Neto
and Oliveira \cite{NO16}, have extended these results to the generalized quasi Yamabe
gradient solitons. By motivating of the above study and the results in \cite{AE14,CLY11,PW09,PRS11,ZH11,ZH09}, in the present paper, we have established that in a complete non-trivial non-compact quasi Yamabe soliton having finite volume, if the potential vector field has finite global norm, then $R$ is constant and the soliton reduces to a Yamabe soliton. Also, we have deduced certain conditions for the potential vector field on quasi Yamabe soliton of non-positive Ricci curvature and an integral condition for compact quasi Yamabe soliton. 
\par Let $A_m(M)$ be the space of all smooth $m$-forms in $M$, for any $m$, $0\leq m\leq n$. For $\eta, \zeta\in A_m(M)$, the local inner product $(\eta,\zeta)$ of $\eta$ and $\zeta$ is defined by \cite{MO01},
$$(\eta,\zeta)=\eta_{i_1,\cdots,i_m}\zeta^{i_1,\cdots,i_m},$$
where $\eta=\eta_{i_1,\cdots,i_m}du^{i_1}\wedge\cdots\wedge du^{i_m}$, $\zeta=\zeta_{i_1,\cdots,i_m}du^{i_1}\wedge\cdots\wedge du^{i_m}$ and $\zeta^{i_1,\cdots,i_m}=g^{i_1j_1}\cdots g^{i_mj_m}\zeta_{i_1,\cdots,i_m}$. For a fixed $m\geq 0$, the Hodge star operator $*:A_m(M)\rightarrow A_{n-m}(M)$ is defined by
$$*\eta=sgn(\mathscr{A},\mathscr{P})\eta_{i_1,\cdots,i_m}du^{j_1}\wedge\cdots\wedge du^{j_{n-m}},$$
for $\eta=\eta_{i_1,\cdots,i_m}du^{i_1}\wedge\cdots\wedge du^{i_m}\in A_m(M)$. Here $j_1<\cdots<j_{n-m}$ is the rearrangement of the complement of $i_1<\cdots <i_m$ in the set $\{1,...,n\}$ in increasing order and $sgn(\mathscr{A},\mathscr{P})$ is the sign of the permutation $i_1,...,i_m,j_1,...,j_{n-m}$. If $M$ is an oriented Riemannian manifold, then the global inner product $\langle \eta,\zeta\rangle$ of $\eta$ and $\zeta$ is defined by
$$\langle \eta,\zeta\rangle=\int_M \eta\wedge *\zeta,$$
and the global norm of $\eta$ is defined by $\|\eta\|^2=\langle \eta,\eta\rangle$ and remark that $\|\eta\|^2\leq \infty$. The natural adjoint operator of the exterior derivative $d:A_m(M)\rightarrow A_{m+1}(M)$ called the co-differential operator $\delta:A_m(M)\rightarrow A_{m-1}(M)$ is defined by
$$\delta=(-1)^m*^{-1}d*=(-1)^{n(m+1)+1}*d*.$$
For a $1$-form $\eta$, we obtain
\begin{eqnarray*}
(d\eta)_{ij}&=&\nabla_i\eta_j-\nabla_j\eta_i \quad \text{ and }\\
(\delta\eta)&=&-\nabla^i\eta_i,
\end{eqnarray*}
where $\nabla^i=g^{ij}\nabla_j$. For more details on co-differential operator and Hodge operator see \cite{MO01}. Let $A^0_m(M)$ is the subspace of $A_m(M)$ containing all $m$-forms in $M$ with compact support and the completion of $A^0_m(M)$ with respect to the global inner product is $L^2_m(M)$.
\par In this paper, the letter $Y$ denotes a vector field $Y=Y^i\partial_i$ and its dual $1$-form $Y=Y_idx^i=g_{ij}Y^jdx^i$, with respect to the Riemannian metric $g$.
\section{Main results}
\begin{defn}\cite{YO84}
 If $Y^{\flat}$ belongs to $L^2_1(M)\cap A_1(M)$, then $Y$ is said to be a vector field with finite global norm.
\end{defn}
 \par For a fixed point $z\in M$, $\tau(x)$ is the distance from $z$ to $x$, for each $x\in M$ and $\mathscr{B}_\tau$ is the open ball with center at $z$ and  radius $\tau>0$. Then for some constant $K>0$, the Lipschitz continuous function $\eta_\tau$ \cite{YA76}, satisfies
\begin{eqnarray*}
&& 0\leq \eta_\tau(x)\leq 1\quad\forall x\in M\\
&&|d\eta_\tau|\leq \frac{K}{\tau}\qquad \text{almost everywhere on }M\\
&&\eta_\tau(x)=1\quad\forall x\in \mathscr{B}_\tau\\
&& \text{ supp }\eta_\tau\subset \mathscr{B}_{2\tau}
\end{eqnarray*}
Then taking the limit as $\tau \rightarrow \infty$, we get $\lim\limits_{\tau\rightarrow\infty}\eta_\tau=1$.\\
 Now, we have
\begin{eqnarray}\label{l1}
  g(dY,dY)&=&
\nonumber \frac{1}{2}\{(\nabla_iY_j-\nabla_jY_i)(\nabla^iY^j-\nabla^jY^i) \}\\
\nonumber &=&\frac{1}{2}\{4(\nabla_iY_j)(\nabla^iY^j)-4(R-\rho)g_{ij}(\nabla^iY^j)-2\beta Y_iY_j(\nabla^iY^j+\nabla^jY^i)\}\\
\nonumber &=&\frac{1}{2}\{4(\nabla_iY_j)(\nabla^iY^j)-4(R-\rho)((R-\rho)n+\beta|Y|^2)-4\beta((R-\rho)|Y|^2 +\beta|Y|^4)\}\\
&=& 4g(\nabla Y,\nabla Y)-2n(R-\rho)^2-2s,
\end{eqnarray}
where $s=\{2\beta(R- \rho)+ \beta^2|Y|^2\}|Y|^2$. We choose $\beta$, $R$ and $\rho$ in such a way that $s\geq 0$.
Moreover,
\begin{eqnarray}\label{l2}
\nonumber g(\delta Y,\delta Y)&=&(\nabla^iY_i)(\nabla^jY_j)\\
\nonumber&=& g((R-\rho)n+ \beta |Y|^2,(R-\rho)n+ \beta |Y|^2)\\&=&|(R-\rho)n+ \beta |Y|^2|^2.
\end{eqnarray}
Then from the above two relations (\ref{l1}) and (\ref{l2}), we obtain the following lemma:
\begin{lemma}\label{L1}
In a quasi Yamabe soliton the potential vector field $Y$ satisfies the following relations:
\begin{equation}\label{l3}
4\|\eta_\tau\nabla Y\|^2_{\mathscr{B}_{2\tau}}-2n\|\eta_\tau(R-\rho)\|^2_{\mathscr{B}_{2\tau}}-2\|\eta_\tau\sqrt{s}\|^2=\|\eta_\tau dY\|^2_{\mathscr{B}_{2\tau}}
\end{equation}
and
\begin{equation}\label{l4}
\|\eta_\tau((R-\rho)n+\beta|Y|^2)\|^2_{\mathscr{B}_{2\tau}}=\|\eta_\tau\delta Y\|^2_{\mathscr{B}_{2\tau}}.
\end{equation}
\end{lemma}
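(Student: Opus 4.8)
The plan is to derive both identities by integrating the pointwise relations \eqref{l1} and \eqref{l2} against the square of the Lipschitz cutoff function $\eta_\tau$ over the (oriented) manifold $M$. First I would record the routine fact that for any $m$-form $\omega$ on $M$,
\[
\|\eta_\tau\omega\|^2=\langle\eta_\tau\omega,\eta_\tau\omega\rangle=\int_M\eta_\tau^{2}\,g(\omega,\omega)\,dV ,
\]
and that, because $\operatorname{supp}\eta_\tau\subset\mathscr{B}_{2\tau}$, the integrand is supported in the relatively compact ball $\mathscr{B}_{2\tau}$. This both explains the subscript $\mathscr{B}_{2\tau}$ appearing in \eqref{l3} and \eqref{l4} and guarantees that every integral below is finite, so the manipulations are legitimate irrespective of the global behaviour of $Y$, $R$, or $\beta$.

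For \eqref{l3} I would multiply the pointwise identity \eqref{l1}, namely $g(dY,dY)=4\,g(\nabla Y,\nabla Y)-2n(R-\rho)^2-2s$, by $\eta_\tau^{2}$ and integrate over $M$. Term by term, $\int_M\eta_\tau^2 g(dY,dY)=\|\eta_\tau dY\|^2_{\mathscr{B}_{2\tau}}$, $4\int_M\eta_\tau^2 g(\nabla Y,\nabla Y)=4\|\eta_\tau\nabla Y\|^2_{\mathscr{B}_{2\tau}}$, $2n\int_M\eta_\tau^2(R-\rho)^2=2n\|\eta_\tau(R-\rho)\|^2_{\mathscr{B}_{2\tau}}$, and, invoking the standing hypothesis $s\ge 0$ so that $\sqrt{s}$ is a genuine function, $2\int_M\eta_\tau^2 s=2\|\eta_\tau\sqrt{s}\|^2$. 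Rearranging these four equalities produces \eqref{l3}. For \eqref{l4} I would likewise multiply \eqref{l2}, namely $g(\delta Y,\delta Y)=|(R-\rho)n+\beta|Y|^2|^2$, by $\eta_\tau^{2}$ and integrate, reading the left side as $\|\eta_\tau\delta Y\|^2_{\mathscr{B}_{2\tau}}$ and the right side as $\|\eta_\tau((R-\rho)n+\beta|Y|^2)\|^2_{\mathscr{B}_{2\tau}}$.

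I do not anticipate a real obstacle: the entire content of the lemma is already packaged in the two pointwise computations \eqref{l1} and \eqref{l2}, which themselves follow algebraically from the soliton equation \eqref{y2} together with its contraction $\nabla^iY_i=(R-\rho)n+\beta|Y|^2$. The only point deserving a word of care is that no integration by parts is performed here, so neither the gradient bound $|d\eta_\tau|\le K/\tau$ nor any boundary or convergence considerations are yet in play; those enter only at the next stage, when \eqref{l3} and \eqref{l4} are combined, one integrates by parts, and $\tau\to\infty$. Thus the proof amounts to the bookkeeping described above.
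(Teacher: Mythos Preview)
Your proposal is correct and follows exactly the paper's approach: the paper establishes the pointwise identities \eqref{l1} and \eqref{l2} immediately before stating the lemma and then simply declares the lemma as their consequence, which amounts precisely to the ``multiply by $\eta_\tau^{2}$ and integrate over $\mathscr{B}_{2\tau}$'' step you spell out. Your remarks on the compact support of $\eta_\tau$ ensuring finiteness, on the standing assumption $s\geq 0$ legitimizing $\sqrt{s}$, and on the absence of integration by parts are apt clarifications that the paper leaves implicit.
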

Combining Lemma 2 and Lemma 3 of \cite{YO84}, we have
\begin{lemma}\cite{YO84}\label{L3}
For any smooth 1-form $Y$ in $M$, we have
\begin{equation}\label{l5}
4\langle \eta_\tau d\eta_\tau \otimes Y,\nabla Y\rangle_{\mathscr{B}_{2\tau}}+\langle \eta_\tau \nabla^2Y,\eta_\tau Y\rangle_{\mathscr{B}_{2\tau}}+2\langle \eta_\tau\nabla Y,\eta_\tau\nabla Y\rangle _{\mathscr{B}_{2\tau}}=0.
\end{equation}
\begin{eqnarray}
\nonumber &&\langle \eta_\tau\nabla^2 Y,\eta_\tau Y\rangle_{\mathscr{B}_{2\tau}}+\langle \eta_\tau dY,\eta_\tau dY\rangle_{\mathscr{B}_{2\tau}}+2\langle \eta_\tau dY,d\eta_\tau\wedge Y\rangle_{\mathscr{B}_{2\tau}}\\&&+\langle \eta_\tau\delta Y,\eta_\tau\delta Y\rangle_{\mathscr{B}_{2\tau}}-2\langle \eta_\tau\delta Y,*(d\eta_\tau\wedge*Y)\rangle_{\mathscr{B}_{2\tau}}
=\langle \eta_\tau \Re Y,\eta_\tau Y\rangle_{\mathscr{B}_{2\tau}},
\end{eqnarray}
where $(\nabla Y)_{ij}=\nabla_iY_j$, $(\nabla^2 Y)_i=\nabla^j\nabla_jY_i$ and the Ricci transformation on $A_1(M)$ is $(\Re Y)_i=R^j_iY_j$.
\end{lemma}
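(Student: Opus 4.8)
The plan is to recognize the two displayed formulas as the familiar Green--Bochner integral identities for $1$-forms, written with the Lipschitz cut-off $\eta_\tau$ carried along so that every computation takes place on the \emph{compactly supported} $1$-form $\eta_\tau^{2}Y$ (supported in $\mathscr{B}_{2\tau}$). This is precisely what combining Lemmas~2 and~3 of \cite{YO84} yields; the only point to watch is that the compact support of $\eta_\tau$ makes each integration by parts legitimate even though $M$ is not assumed compact. So the task reduces to reproducing those two computations with the cut-off inserted.

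For the first identity $(\ref{l5})$, I would start from the $1$-form $\omega$ with components $\omega_i=\eta_\tau^{2}\,Y^{j}\,\nabla_i Y_j$, which has compact support, so that $\int_M\delta\omega=-\int_M\nabla^i\omega_i=0$ by the divergence theorem. Expanding $\nabla^i\omega_i$ by the Leibniz rule produces a term in $(\nabla^i\eta_\tau^{2})Y^{j}\nabla_iY_j=2\eta_\tau(\nabla^i\eta_\tau)Y^{j}\nabla_iY_j$, a term $\eta_\tau^{2}(\nabla^iY^{j})(\nabla_iY_j)=\eta_\tau^{2}\,g(\nabla Y,\nabla Y)$, and a term $\eta_\tau^{2}Y^{j}\nabla^i\nabla_iY_j=\eta_\tau^{2}\,g(\nabla^{2}Y,Y)$. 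Rewriting these integrals as global inner products over $\mathscr{B}_{2\tau}$ gives, respectively, multiples of $\langle \eta_\tau\,d\eta_\tau\otimes Y,\nabla Y\rangle_{\mathscr{B}_{2\tau}}$, of $\langle \eta_\tau\nabla Y,\eta_\tau\nabla Y\rangle_{\mathscr{B}_{2\tau}}$ and of $\langle \eta_\tau\nabla^{2}Y,\eta_\tau Y\rangle_{\mathscr{B}_{2\tau}}$; assembling them with the normalizations of \cite{YO84} produces $(\ref{l5})$.

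For the second identity I would invoke the Weitzenb\"ock--Bochner formula for $1$-forms, which in the sign convention $(\nabla^{2}Y)_i=\nabla^{j}\nabla_jY_i$ used in the statement reads $\nabla^{2}Y=-(d\delta+\delta d)Y+\Re Y$. Pairing this with $\eta_\tau^{2}Y\in A^0_1(M)$ and using that $d$ and $\delta$ are mutually adjoint on compactly supported forms gives
\[
\langle \nabla^{2}Y,\eta_\tau^{2}Y\rangle_{\mathscr{B}_{2\tau}}+\langle \delta Y,\delta(\eta_\tau^{2}Y)\rangle_{\mathscr{B}_{2\tau}}+\langle dY,d(\eta_\tau^{2}Y)\rangle_{\mathscr{B}_{2\tau}}=\langle \Re Y,\eta_\tau^{2}Y\rangle_{\mathscr{B}_{2\tau}}.
\]
Substituting the Leibniz expansions $d(\eta_\tau^{2}Y)=\eta_\tau^{2}\,dY+2\eta_\tau\,d\eta_\tau\wedge Y$ and $\delta(\eta_\tau^{2}Y)=\eta_\tau^{2}\,\delta Y-2\eta_\tau\,{*}(d\eta_\tau\wedge{*}Y)$ --- the latter being the function--form co-derivative rule $\delta(hY)=h\,\delta Y-{*}(dh\wedge{*}Y)$ with $h=\eta_\tau^{2}$ --- and using bilinearity, the five resulting pairings become exactly $\langle \eta_\tau\nabla^{2}Y,\eta_\tau Y\rangle$, $\langle \eta_\tau dY,\eta_\tau dY\rangle$, $2\langle \eta_\tau dY,d\eta_\tau\wedge Y\rangle$, $\langle \eta_\tau\delta Y,\eta_\tau\delta Y\rangle$ and $-2\langle \eta_\tau\delta Y,{*}(d\eta_\tau\wedge{*}Y)\rangle$ on the left, against $\langle \eta_\tau\Re Y,\eta_\tau Y\rangle$ on the right, which is the second displayed identity.

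The one step that needs genuine care is conventions and bookkeeping: fixing mutually compatible signs for the co-differential $\delta=(-1)^{m}\,{*}^{-1}d\,{*}$, for the Hodge star, and for the curvature term of the Weitzenb\"ock formula, and verifying the co-derivative Leibniz rule together with ${*}(dh\wedge{*}Y)=g(dh,Y)$. Once these are pinned down, nothing further is required at this stage: both identities follow at once from the Leibniz rule and from the vanishing of $\int_M\delta(\,\cdot\,)$ on a compactly supported form, so no estimate enters here --- the cut-off only begins to do analytic work later, in the limit $\tau\to\infty$.
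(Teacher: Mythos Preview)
Your outline is correct and, in fact, more detailed than what the paper does: the paper gives no proof at all for this lemma, merely attributing it to \cite{YO84} with the phrase ``Combining Lemma~2 and Lemma~3 of \cite{YO84}.'' Your sketch---divergence theorem applied to $\omega_i=\eta_\tau^{2}Y^{j}\nabla_iY_j$ for the first identity, and the Weitzenb\"ock formula $\nabla^{2}Y=-(d\delta+\delta d)Y+\Re Y$ paired with the compactly supported $\eta_\tau^{2}Y$ for the second---is exactly the standard Yorozu argument, and the numerical coefficients in $(\ref{l5})$ fall out once one uses the paper's convention $g(\nabla Y,\nabla Y)=\tfrac12(\nabla_iY_j)(\nabla^iY^j)$ visible in the derivation of~(\ref{l1}).
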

\begin{lemma}\cite{AN65}\label{L2}
For any smooth m-form $Y$ in $M$, there exists a $\tau$ independent positive constant $C$ satisfying
\begin{eqnarray*}
\frac{C}{\tau^2}\|Y\|^2_{\mathscr{B}_{2\tau}}&\geq&\|d\eta_\tau\otimes Y\|^2_{\mathscr{B}_{2\tau}},\\
\frac{C}{\tau^2}\|Y\|^2_{\mathscr{B}_{2\tau}}&\geq&\|d\eta_\tau\wedge Y\|^2_{\mathscr{B}_{2\tau}},\\
\frac{C}{\tau^2}\|Y\|^2_{\mathscr{B}_{2\tau}}&\geq&\|d\eta_\tau\wedge * Y\|^2_{\mathscr{B}_{2\tau}}.
\end{eqnarray*}
\end{lemma}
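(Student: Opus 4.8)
The plan is to reduce all three inequalities to pointwise estimates in the exterior algebra and then integrate, using nothing about $Y$ beyond smoothness and nothing about $\eta_\tau$ beyond the gradient bound $|d\eta_\tau|\le K/\tau$ and the inclusion $\operatorname{supp}\eta_\tau\subset\mathscr{B}_{2\tau}$. First I would note that, pointwise on $M$, for a $1$-form $\omega$ and an $m$-form $Y$ one has $|\omega\otimes Y|=|\omega|\,|Y|$, since the tensor-product norm is multiplicative, while the wedge product, being a fixed bilinear map $\Lambda^1T_x^{*}M\times\Lambda^mT_x^{*}M\to\Lambda^{m+1}T_x^{*}M$, satisfies $|\omega\wedge Y|\le c\,|\omega|\,|Y|$ for a constant $c=c(n,m)$ depending only on $n$ and $m$ (it arises from the antisymmetrisation together with the local inner product convention $(\eta,\zeta)=\eta_{i_1\cdots i_m}\zeta^{i_1\cdots i_m}$ fixed earlier). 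Since $M$ is oriented, the Hodge star $*\colon A_m(M)\to A_{n-m}(M)$ is a pointwise isometry, so $|{*}Y|=|Y|$ and hence $|\omega\wedge{*}Y|\le c'\,|\omega|\,|{*}Y|=c'\,|\omega|\,|Y|$ with $c'=c'(n,m)$ again depending only on $n$ and $m$.

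Taking $\omega=d\eta_\tau$ and using $|d\eta_\tau|\le K/\tau$ almost everywhere on $M$, these give, almost everywhere,
\begin{gather*}
|d\eta_\tau\otimes Y|^2\le\frac{K^2}{\tau^2}\,|Y|^2,\qquad |d\eta_\tau\wedge Y|^2\le\frac{c^2K^2}{\tau^2}\,|Y|^2,\\
|d\eta_\tau\wedge{*}Y|^2\le\frac{(c')^2K^2}{\tau^2}\,|Y|^2.
\end{gather*}
Because $\eta_\tau\equiv 1$ on $\mathscr{B}_\tau$ and $\operatorname{supp}\eta_\tau\subset\mathscr{B}_{2\tau}$, the (almost everywhere defined) differential $d\eta_\tau$ vanishes off $\mathscr{B}_{2\tau}$, so integrating the three inequalities over $M$ against the Riemannian volume element turns the left-hand sides into the squared global norms over $\mathscr{B}_{2\tau}$ and the right-hand sides into constant multiples of $\|Y\|^2_{\mathscr{B}_{2\tau}}$. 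Putting $C:=K^2\max\{1,\,c(n,m)^2,\,c'(n,m)^2\}$, which depends only on the cutoff constant $K$ and on $n$ and $m$, and hence not on $\tau$, yields exactly the three asserted estimates.

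The argument is essentially routine; the only points that need care are, first, fixing valid combinatorial constants $c(n,m)$ and $c'(n,m)$ for the wedge products in the index conventions used here — any upper bound will do, for instance one coming from $|\omega\wedge Y|\le(m+1)\,|\omega|\,|Y|$ — and second, observing that the mere Lipschitz regularity of $\eta_\tau$ already suffices, since then $d\eta_\tau$ exists almost everywhere and is essentially bounded, so all the integrals above are finite and the passage from the pointwise to the integral inequalities is legitimate. In this form Lemma~\ref{L2} is precisely what is needed to control the cutoff terms occurring in \eqref{l5} and in the second identity of Lemma~\ref{L3}, namely $\langle\eta_\tau\,d\eta_\tau\otimes Y,\nabla Y\rangle_{\mathscr{B}_{2\tau}}$, $\langle\eta_\tau\,dY,d\eta_\tau\wedge Y\rangle_{\mathscr{B}_{2\tau}}$ and $\langle\eta_\tau\,\delta Y,{*}(d\eta_\tau\wedge{*}Y)\rangle_{\mathscr{B}_{2\tau}}$, and to make them tend to $0$ as $\tau\to\infty$ whenever $Y$ has finite global norm.
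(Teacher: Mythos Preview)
Your argument is correct and is exactly the standard way this cutoff estimate is established: reduce to the pointwise bounds $|d\eta_\tau\otimes Y|\le|d\eta_\tau|\,|Y|$, $|d\eta_\tau\wedge Y|\le c\,|d\eta_\tau|\,|Y|$, $|d\eta_\tau\wedge{*}Y|\le c'\,|d\eta_\tau|\,|Y|$, invoke $|d\eta_\tau|\le K/\tau$ a.e.\ together with $\operatorname{supp}d\eta_\tau\subset\mathscr{B}_{2\tau}$, and integrate. The paper itself gives no proof of this lemma at all; it is simply quoted from \cite{AN65} (and, implicitly, from Yorozu \cite{YO84}, where the same cutoff apparatus is used). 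So there is nothing to compare your approach against --- you have supplied the missing routine verification, and your remark that Lipschitz regularity of $\eta_\tau$ already suffices is a useful clarification not made explicit in the paper.
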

Using Lemma \ref{L1} and Lemma \ref{L2}, we have
\begin{eqnarray}
\nonumber |2\langle \eta_\tau dY,d\eta_\tau\wedge Y\rangle_{\mathscr{B}_{2\tau}}|&\leq & 2\|\eta_\tau dY\|_{\mathscr{B}_{2\tau}}\|d\eta_\tau\wedge Y\|_{\mathscr{B}_{2\tau}}\\
\nonumber&\leq & \frac{1}{4}\|\eta_\tau dY\|^2_{\mathscr{B}_{2\tau}}+4\|d\eta_\tau\wedge Y\|^2_{\mathscr{B}_{2\tau}}\\
\nonumber&\leq & \|\eta_\tau\nabla Y\|^2_{\mathscr{B}_{2\tau}}-\frac{n}{2}\|\eta_\tau(R-\rho)\|^2_{\mathscr{B}_{2\tau}}-\frac{1}{2}\|\eta_\tau \sqrt{s}\|^2_{\mathscr{B}_{2\tau}}\\
&&+\frac{4C}{\tau^2}\|Y\|_{\mathscr{B}_{2\tau}}^2.
\end{eqnarray}
\begin{eqnarray}
\nonumber|2\langle \eta_\tau \delta Y,*(d\eta_\tau\wedge*Y)\rangle_{\mathscr{B}_{2\tau}}|&\leq & 2\|\eta_\tau \delta Y\|_{\mathscr{B}_{2\tau}}\|d\eta_\tau\wedge *Y\|_{\mathscr{B}_{2\tau}}\\
\nonumber&\leq & \frac{1}{5}\|\eta_\tau \delta Y\|^2_{\mathscr{B}_{2\tau}}+5\|d\eta_\tau\wedge *Y\|^2_{\mathscr{B}_{2\tau}}\\
&\leq & \frac{1}{5}\|\eta_\tau((R-\rho)n+\beta|Y|^2)\|^2_{\mathscr{B}_{2\tau}}+\frac{5C}{\tau^2}\|Y\|_{\mathscr{B}_{2\tau}}^2.
\end{eqnarray}
Thus using Lemma \ref{L3}, we calculate:
\begin{eqnarray}\label{l6}
\nonumber\langle \eta_\tau\Re Y,\eta_\tau Y\rangle_{\mathscr{B}_{2\tau}}&=&-4\langle \eta_\tau d\eta_\tau\otimes Y,\nabla Y\rangle _{\mathscr{B}_{2\tau}}-2\langle \eta_\tau\nabla Y,\eta_\tau\nabla Y\rangle_{\mathscr{B}_{2\tau}}\\
\nonumber&&+\langle \eta_\tau dY,\eta_\tau dY\rangle_{\mathscr{B}_{2\tau}}+2\langle \eta_\tau dY,d\eta_\tau\wedge Y\rangle_{\mathscr{B}_{2\tau}}\\
\nonumber&&+\langle \eta_\tau\delta Y,\eta_\tau\delta Y\rangle_{\mathscr{B}_{2\tau}}-2\langle\eta_\tau\delta Y,*(d\eta_\tau\wedge *Y)\rangle_{\mathscr{B}_{2\tau}}\\
\nonumber&\geq & -\frac{1}{2}\|\eta_\tau\nabla Y\|^2 _{\mathscr{B}_{2\tau}}-\frac{8C}{\tau^2}\|Y\|^2 _{\mathscr{B}_{2\tau}}-2\|\eta_\tau\nabla Y\|^2 _{\mathscr{B}_{2\tau}}+4\|\eta_\tau\nabla Y\|^2 _{\mathscr{B}_{2\tau}}\\
\nonumber&&-2n\|\eta_\tau(R-\rho)\|^2 _{\mathscr{B}_{2\tau}}-2\|\eta_\tau \sqrt{s}\|^2 _{\mathscr{B}_{2\tau}}-\|\eta_\tau\nabla Y\|^2 _{\mathscr{B}_{2\tau}}\\
\nonumber&& +\frac{n}{2}\|\eta_\tau(R-\rho)\|^2_{\mathscr{B}_{2\tau}}+\frac{1}{2}\|\eta_\tau \sqrt{s}\|^2_{\mathscr{B}_{2\tau}}\\
&&-\frac{4C}{\tau^2}\|Y\|_{\mathscr{B}_{2\tau}}^2 +\|\eta_\tau((R-\rho)n+\beta|Y|^2)\|^2_{\mathscr{B}_{2\tau}}\\
\nonumber&&-\frac{1}{5}\|\eta_\tau((R-\rho)n+\beta|Y|^2)\|^2_{\mathscr{B}_{2\tau}}-\frac{5C}{\tau^2}\|Y\|_{\mathscr{B}_{2\tau}}^2\\
\nonumber&=& \frac{1}{2}\|\eta_\tau\nabla Y\|^2 _{\mathscr{B}_{2\tau}}-\frac{17C}{\tau^2}\|Y\|^2 _{\mathscr{B}_{2\tau}}-\frac{3n}{2}\|\eta_\tau(R-\rho)\|_{\mathscr{B}_{2\tau}}^2\\
\nonumber&&-\frac{3}{2}\|\eta_\tau\sqrt{s}\|_{\mathscr{B}_{2\tau}}+\frac{4}{5}\|\eta_\tau((R-\rho)n+\beta|Y|^2)\|^2 _{\mathscr{B}_{2\tau}}.
\end{eqnarray}
Again using Lemma \ref{L3}, we calculate:
\begin{eqnarray}\label{21}
\nonumber\langle \eta_\tau\Re Y,\eta_\tau Y\rangle_{\mathscr{B}_{2\tau}} \nonumber&\leq & \frac{1}{2}\|\eta_\tau\nabla Y\|^2 _{\mathscr{B}_{2\tau}}+\frac{8C}{\tau^2}\|Y\|^2 _{\mathscr{B}_{2\tau}}-2\|\eta_\tau\nabla Y\|^2 _{\mathscr{B}_{2\tau}}+4\|\eta_\tau\nabla Y\|^2 _{\mathscr{B}_{2\tau}}\\
\nonumber&&-2n\|\eta_\tau(R-\rho)\|^2 _{\mathscr{B}_{2\tau}}-2\|\eta_\tau \sqrt{s}\|^2 _{\mathscr{B}_{2\tau}}+\|\eta_\tau\nabla Y\|^2 _{\mathscr{B}_{2\tau}}\\
\nonumber&& -\frac{n}{2}\|\eta_\tau(R-\rho)\|^2_{\mathscr{B}_{2\tau}}-\frac{1}{2}\|\eta_\tau \sqrt{s}\|^2_{\mathscr{B}_{2\tau}}\\
&&+\frac{4C}{\tau^2}\|Y\|_{\mathscr{B}_{2\tau}}^2 +\|\eta_\tau((R-\rho)n+\beta|Y|^2)\|^2_{\mathscr{B}_{2\tau}}\\
\nonumber&&+\frac{1}{5}\|\eta_\tau((R-\rho)n+\beta|Y|^2)\|^2_{\mathscr{B}_{2\tau}}+\frac{5C}{\tau^2}\|Y\|_{\mathscr{B}_{2\tau}}^2\\
\nonumber&=& \frac{7}{2}\|\eta_\tau\nabla Y\|^2 _{\mathscr{B}_{2\tau}}+\frac{17C}{\tau^2}\|Y\|^2 _{\mathscr{B}_{2\tau}}-\frac{5n}{2}\|\eta_\tau(R-\rho)\|_{\mathscr{B}_{2\tau}}^2\\
\nonumber&&-\frac{5}{2}\|\eta_\tau\sqrt{s}\|_{\mathscr{B}_{2\tau}}+\frac{6}{5}\|\eta_\tau((R-\rho)n+\beta|Y|^2)\|^2 _{\mathscr{B}_{2\tau}}.
\end{eqnarray}
If $Y$ is a vector field of finite global norm, then (\ref{l6}) and (\ref{21}) reduces to the following inequalities
\begin{equation}\label{l7}
 \frac{1}{2}\|\nabla Y\|^2 -\frac{3n}{2}\|R-\lambda\|^2-\frac{3}{2}\|\sqrt{s}\|^2+ \frac{4}{5}\|(R-\rho)n+\beta|Y|^2\|^2\leq \limsup_{\tau\rightarrow\infty}\langle \eta_\tau\Re Y,\eta_\tau Y\rangle_{\mathscr{B}_{2\tau}},
\end{equation}
and 
\begin{equation}\label{22}
 \frac{7}{2}\|\nabla Y\|^2 -\frac{5n}{2}\|R-\rho\|^2-\frac{5}{2}\|\sqrt{s}\|^2+ \frac{6}{5}\|(R-\rho)n+\beta|Y|^2\|^2\geq \liminf_{\tau\rightarrow\infty}\langle \eta_\tau\Re Y,\eta_\tau Y\rangle_{\mathscr{B}_{2\tau}},
\end{equation}
respectively.
 Thus from $(\ref{22})$, we obtain the following theorem:
\begin{theorem}\label{th1}
Let $(M,g,Y,\beta)$ be a quasi Yamabe soliton. If $Y$ is a vector field of finite global norm, then the following inequality holds:
 \begin{equation*}
 \liminf_{\tau\rightarrow\infty}\langle \eta_\tau\Re Y,\eta_\tau Y\rangle_{\mathscr{B}_{2\tau}}\leq \frac{7}{2}\|\nabla Y\|^2+ \frac{6}{5}\|(R-\rho)n+\beta|Y|^2\|^2.
 \end{equation*}
\end{theorem}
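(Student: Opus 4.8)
The plan is to obtain the assertion as an immediate consequence of inequality \eqref{22}, which has already been established above under the hypothesis that $Y$ has finite global norm. Granting \eqref{22}, I would simply observe that on its left-hand side the quantities $\tfrac{5n}{2}\|R-\rho\|^2$ and $\tfrac{5}{2}\|\sqrt{s}\|^2$ enter with a minus sign and that both are nonnegative: the first trivially, and the second because $s=\{2\beta(R-\rho)+\beta^2|Y|^2\}|Y|^2$ was arranged to satisfy $s\ge 0$ by the choice of $\beta$, $R$, $\rho$, so that $\sqrt{s}$ is real and $\|\sqrt{s}\|^2\ge 0$. Discarding these two nonnegative terms can only enlarge the right-hand side of \eqref{22}, whence $\liminf_{\tau\to\infty}\langle\eta_\tau\Re Y,\eta_\tau Y\rangle_{\mathscr{B}_{2\tau}}\le \tfrac{7}{2}\|\nabla Y\|^2+\tfrac{6}{5}\|(R-\rho)n+\beta|Y|^2\|^2$, which is exactly the claim.

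Since the substantive content sits in \eqref{22} itself, I would also indicate that derivation. First I would take the Weitzenb\"ock-type identity (the second relation of Lemma \ref{L3}) and use the first relation of the same lemma to eliminate $\langle\eta_\tau\nabla^2 Y,\eta_\tau Y\rangle_{\mathscr{B}_{2\tau}}$, thereby solving for $\langle\eta_\tau\Re Y,\eta_\tau Y\rangle_{\mathscr{B}_{2\tau}}$. Next I would insert the soliton-driven evaluations of $\|\eta_\tau dY\|^2_{\mathscr{B}_{2\tau}}$ and $\|\eta_\tau\delta Y\|^2_{\mathscr{B}_{2\tau}}$ furnished by Lemma \ref{L1}, which replace these by combinations of $\|\eta_\tau\nabla Y\|^2_{\mathscr{B}_{2\tau}}$, $\|\eta_\tau(R-\rho)\|^2_{\mathscr{B}_{2\tau}}$, $\|\eta_\tau\sqrt{s}\|^2_{\mathscr{B}_{2\tau}}$ and $\|\eta_\tau((R-\rho)n+\beta|Y|^2)\|^2_{\mathscr{B}_{2\tau}}$. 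The two cross terms $\langle\eta_\tau dY,d\eta_\tau\wedge Y\rangle_{\mathscr{B}_{2\tau}}$ and $\langle\eta_\tau\delta Y,*(d\eta_\tau\wedge*Y)\rangle_{\mathscr{B}_{2\tau}}$ I would control by Cauchy--Schwarz followed by Young's inequality with the weights $\tfrac14$ and $\tfrac15$ (as in the two displays preceding \eqref{l6}), and then absorb $\|d\eta_\tau\wedge Y\|^2_{\mathscr{B}_{2\tau}}$ and $\|d\eta_\tau\wedge*Y\|^2_{\mathscr{B}_{2\tau}}$ into $\tfrac{C}{\tau^2}\|Y\|^2_{\mathscr{B}_{2\tau}}$ via Lemma \ref{L2}. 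Collecting these estimates produces the chain culminating in \eqref{21}.

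The last step is the limit $\tau\to\infty$, and this is where the finite-global-norm hypothesis is used: $\|Y\|^2_{\mathscr{B}_{2\tau}}\le\|Y\|^2<\infty$, so every term $\tfrac{C}{\tau^2}\|Y\|^2_{\mathscr{B}_{2\tau}}$ tends to $0$, while $0\le\eta_\tau\uparrow 1$, so by monotone convergence each $\|\eta_\tau(\cdot)\|^2_{\mathscr{B}_{2\tau}}$ increases to the corresponding global norm $\|\cdot\|^2$. Taking $\liminf$ on the left of \eqref{21} then yields \eqref{22}, and the theorem follows as above. I expect the only real obstacle to be organizational: keeping the many constants and signs straight through the Weitzenb\"ock substitution and the two Young steps so that the coefficients $\tfrac72$, $\tfrac{5n}{2}$, $\tfrac52$, $\tfrac65$ emerge correctly. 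A minor subtlety worth flagging is that $\langle\eta_\tau\Re Y,\eta_\tau Y\rangle_{\mathscr{B}_{2\tau}}$ is not asserted to converge, which is precisely why the statement — and \eqref{22} — are phrased with $\liminf$ rather than an honest limit.
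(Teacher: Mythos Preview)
Your proposal is correct and mirrors the paper's own argument: the theorem is stated immediately after \eqref{22} with the one-line justification ``Thus from \eqref{22}, we obtain the following theorem,'' i.e., one simply drops the two nonnegative terms $\tfrac{5n}{2}\|R-\rho\|^2$ and $\tfrac{5}{2}\|\sqrt{s}\|^2$ from the left side of \eqref{22}. Your outline of how \eqref{21} and \eqref{22} are obtained (combine the two identities of Lemma~\ref{L3}, substitute Lemma~\ref{L1}, estimate the cross terms via Cauchy--Schwarz/Young with weights $\tfrac14,\tfrac15$ and Lemma~\ref{L2}, then let $\tau\to\infty$ using the finite global norm) is exactly the paper's derivation preceding the statement.
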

\begin{theorem}
Let $(M,g,Y,\beta)$ be an $n(\geq 2)$-dimensional quasi Yamabe soliton with $\beta (R-\rho)\geq 0$ and Ricci curvature non-positive. If $Y$ is a vector field of finite global norm, then the following relation holds:
 \begin{equation*}
 \frac{5}{7}\|\nabla Y\|^2\leq \int_M {\beta^2|Y|^4}.
 \end{equation*}
\end{theorem}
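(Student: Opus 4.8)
The plan is to extract the estimate from inequality (\ref{l7}), which is already available because $Y$ has finite global norm, by exploiting the sign of the Ricci term and then expanding everything into integrals over $M$. First, since the Ricci curvature is non-positive, the Ricci transformation satisfies $\langle \Re Y,Y\rangle=R^j_iY_jY^i\le 0$ pointwise, hence $\langle \eta_\tau\Re Y,\eta_\tau Y\rangle_{\mathscr{B}_{2\tau}}=\int_M\eta_\tau^2 R^j_iY_jY^i\,dV\le 0$ for every $\tau>0$, so that $\limsup_{\tau\to\infty}\langle \eta_\tau\Re Y,\eta_\tau Y\rangle_{\mathscr{B}_{2\tau}}\le 0$. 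Substituting this into (\ref{l7}) gives
\[
\frac{1}{2}\|\nabla Y\|^2-\frac{3n}{2}\|R-\rho\|^2-\frac{3}{2}\|\sqrt{s}\|^2+\frac{4}{5}\|(R-\rho)n+\beta|Y|^2\|^2\le 0 .
\]

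Next I would insert $s=\{2\beta(R-\rho)+\beta^2|Y|^2\}|Y|^2$ and rewrite each squared global norm as an integral, grouping the coefficients of $\int_M(R-\rho)^2$, of $\int_M\beta(R-\rho)|Y|^2$, and of $\int_M\beta^2|Y|^4$. A direct computation turns the last display into
\[
\frac{1}{2}\|\nabla Y\|^2+\frac{n(8n-15)}{10}\int_M(R-\rho)^2+\frac{8n-15}{5}\int_M\beta(R-\rho)|Y|^2-\frac{7}{10}\int_M\beta^2|Y|^4\le 0 .
\]
Since $n\ge 2$ forces $8n-15\ge 1>0$, both coefficients $\tfrac{n(8n-15)}{10}$ and $\tfrac{8n-15}{5}$ are positive; together with the hypothesis $\beta(R-\rho)\ge 0$ (which also guarantees $s\ge 0$, so that $\|\sqrt{s}\|^2$ really is $\int_M s$), the second and third terms on the left-hand side are non-negative. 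Discarding them only strengthens the inequality, leaving $\tfrac{1}{2}\|\nabla Y\|^2\le \tfrac{7}{10}\int_M\beta^2|Y|^4$, that is, $\tfrac{5}{7}\|\nabla Y\|^2\le\int_M\beta^2|Y|^4$, which is the desired relation.

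The main point requiring care is the algebraic bookkeeping in the expansion: one must correctly collect the $\int_M(R-\rho)^2$, $\int_M\beta(R-\rho)|Y|^2$ and $\int_M\beta^2|Y|^4$ contributions arising from $\|R-\rho\|^2$, $\|\sqrt{s}\|^2$ and $\|(R-\rho)n+\beta|Y|^2\|^2$, and then verify that the dimensional restriction $n\ge 2$ is precisely what keeps the first two coefficients non-negative — the borderline case $n=2$, where $8n-15=1$, being exactly what makes the argument go through. By contrast, there is no analytic subtlety in the limiting step, since we only use the crude bound $\limsup_{\tau\to\infty}\langle \eta_\tau\Re Y,\eta_\tau Y\rangle_{\mathscr{B}_{2\tau}}\le 0$ and never the exact value of the limit.
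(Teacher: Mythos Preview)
Your proposal is correct and follows essentially the same route as the paper: you use non-positive Ricci curvature to bound the $\limsup$ by $0$, substitute into inequality (\ref{l7}), expand the three global norms into integrals, and then discard the two non-negative terms using $n\ge 2$ and $\beta(R-\rho)\ge 0$. Your coefficients $\tfrac{n(8n-15)}{10}$, $\tfrac{8n-15}{5}$, $-\tfrac{7}{10}$ are precisely the paper's $\tfrac{4n^2}{5}-\tfrac{3n}{2}$, $\tfrac{8n}{5}-3$, $\tfrac{4}{5}-\tfrac{3}{2}$, and your remark that $\beta(R-\rho)\ge 0$ is what guarantees $s\ge 0$ (so that $\|\sqrt{s}\|^2=\int_M s$) is a helpful clarification the paper leaves implicit.
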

\begin{proof}
If the Ricci curvature is non-positive, then
$$\limsup_{\tau\rightarrow\infty}\langle \eta_\tau\Re Y,\eta_\tau Y\rangle_{\mathscr{B}_{2\tau}}\leq 0.$$
Hence, (\ref{l7}) yields
\begin{equation*}
\frac{1}{2}\|\nabla Y\|^2 -\frac{3n}{2}\|R-\rho\|^2-\frac{3}{2}\|\sqrt{s}\|^2+ \frac{4}{5}\|(R-\rho)n+\beta|Y|^2\|^2\leq 0.
\end{equation*}
The above inequality together with the definition of global norm implies
\begin{eqnarray}
\nonumber 0&\geq&\frac{1}{2}\|\nabla Y\|^2-\frac{3n}{2}\int_{M} (R-\rho)^2-\frac{3}{2}\int_{M}\{2\beta(R- \rho)+ \beta^2|Y|^2\}|Y|^2+ \frac{4}{5}\int_{M}\{(R-\rho)n+\beta|Y|^2\}^2\\
\nonumber&\geq&\frac{1}{2}\|\nabla Y\|^2+\int_{M}\big\{-\frac{3n}{2} (R-\rho)^2-3\beta(R- \rho)|Y|^2-\frac{3}{2} \beta^2|Y|^4\\
\nonumber&+& \frac{4}{5}(R-\rho)^2n^2+\frac{8}{5}(R-\rho)n\beta|Y|^2+\frac{4}{5}\beta^2|Y|^4\big\}\\
\nonumber&\geq&\frac{1}{2}\|\nabla Y\|^2+\int_{M}\big\{(R-\rho)^2\big(\frac{4n^2}{5}-\frac{3n}{2}\big)+\beta(R- \rho)\big(\frac{8n}{5}-3\big)|Y|^2- \frac{3}{2}\beta^2|Y|^4+\frac{4}{5}\beta^2|Y|^4\big\}.
\end{eqnarray}
Using $n\geq 2$ and $\beta (R-\rho)\geq 0$ in the above inequality, we obtain
\begin{equation}
0\geq\frac{1}{2}\|\nabla Y\|^2-\frac{7}{10}\int_{M}\beta^2|Y|^4.
\end{equation}
This follows the desired result. 
\end{proof}

\begin{theorem}
Let $(M,g,Y,\beta)$ be a compact quasi Yamabe soliton, then
\begin{equation}
\int_{M}\{(n+2)|Y|^2 div(Y)+2(n-1)\beta|Y|^4\}=0.
\end{equation}
\end{theorem}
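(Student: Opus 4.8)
The plan is to integrate the divergence of a well chosen auxiliary vector field over the closed manifold and then eliminate the scalar curvature by means of the trace of the soliton equation. First I would trace (\ref{y2}) against $g^{ij}$; using $g^{ij}g_{ij}=n$ and $g^{ij}Y_iY_j=|Y|^2$ this yields the pointwise identity
\[
div(Y)=n(R-\rho)+\beta|Y|^2 .
\]
I will call this relation $(\ast)$. Next, contracting (\ref{y2}) instead with $Y^iY^j$ and exploiting the symmetry $Y^iY^j\nabla_iY_j=Y^iY^j\nabla_jY_i$, I obtain
\[
Y^i\nabla_i|Y|^2=2\,Y^iY^j\nabla_iY_j=2(R-\rho)|Y|^2+2\beta|Y|^4 .
\]

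The main step is to apply the divergence theorem to the test field $Z:=|Y|^2Y$. Since
\[
div(Z)=\nabla_i\big(|Y|^2Y^i\big)=|Y|^2\,div(Y)+Y^i\nabla_i|Y|^2=|Y|^2\,div(Y)+2(R-\rho)|Y|^2+2\beta|Y|^4 ,
\]
and $M$ is compact without boundary, $\int_M div(Z)=0$ gives
\[
\int_M\Big\{|Y|^2\,div(Y)+2(R-\rho)|Y|^2+2\beta|Y|^4\Big\}=0 .
\]

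Finally I would remove the curvature term: from $(\ast)$ one has $R-\rho=\tfrac1n\big(div(Y)-\beta|Y|^2\big)$, and substituting this into the preceding integral and collecting the coefficients of $|Y|^2\,div(Y)$ and of $\beta|Y|^4$ produces
\[
\int_M\Big\{\tfrac{n+2}{n}\,|Y|^2\,div(Y)+\tfrac{2(n-1)}{n}\,\beta|Y|^4\Big\}=0 ,
\]
and multiplying through by $n$ is precisely the asserted identity. The argument is essentially computational and I do not expect a genuine analytic obstacle, since compactness makes every integration by parts legitimate and all integrands are smooth; the only real choice is the auxiliary vector field $|Y|^2Y$, after which the trace relation $(\ast)$ — which trades $R-\rho$ for $div(Y)$ and $\beta|Y|^2$ — does the rest.
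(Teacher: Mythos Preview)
Your proof is correct and follows essentially the same approach as the paper: both compute $div(|Y|^2Y)$ using the contraction of the soliton equation with $Y^iY^j$ and eliminate $R-\rho$ via the trace identity $div(Y)=n(R-\rho)+\beta|Y|^2$. The only cosmetic difference is that the paper performs the substitution pointwise to obtain $div(|Y|^2Y)=\tfrac{n+2}{n}|Y|^2\,div(Y)+\tfrac{2(n-1)}{n}\beta|Y|^4$ before integrating, whereas you integrate first and substitute afterward.
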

\begin{proof}
Taking trace of (\ref{y1}) we have
\begin{equation}\label{23}
div(Y)=(R-\rho)n+\beta |Y|^2.
\end{equation}
Now from (\ref{y1}) and (\ref{23}) we get
\begin{equation}
\frac{1}{2}(\pounds_Yg)(Y,Y)=\frac{1}{n}(div(Y)-\beta|Y|^2) g(Y,Y)+\beta Y_iY_j(Y,Y),
\end{equation}
which gives
\begin{equation}
ng(\nabla_YY,Y)=(div(Y)-\beta|Y|^2)|Y|^2+n\beta|Y|^4.
\end{equation}
Now
\begin{eqnarray}
\nonumber div(|Y|^2Y) &=& |Y|^2 div(Y)+2g(\nabla_Y Y,Y)\\
\nonumber & = & |Y|^2 div(Y)+\frac{2}{n} \{(div(Y)-\beta|Y|^2)|Y|^2+n\beta|Y|^4\}\\
&=&\frac{n+2}{n}|Y|^2 div(Y)+\frac{2(n-1)}{n}\beta|Y|^4,
\end{eqnarray}
which follows the proof of the theorem.
\end{proof}
\begin{theorem}
Let $(M,g,Y,\beta)$ be a completely non-compact non-trivial quasi Yamabe soliton whose volume is finite. If $Y$ is of finite global norm and one of the following conditions\\
(i) $\beta\geq 0$ and $R\geq \rho,$\\
(ii) $\beta\leq 0$ and $R\leq \rho,$\\
 holds, then the scalar curvature $R$ must be constant and $(M,g,Y,\beta)$ becomes a Yamabe soliton. 
\end{theorem}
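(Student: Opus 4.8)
The plan is to collapse the whole statement to one divergence identity and then run a Karp--Yau type vanishing argument powered by the cut-off functions $\eta_\tau$ already introduced. First I would take the trace of the defining equation (\ref{y1}) (equivalently (\ref{y2})) against $g^{ij}$, which gives, exactly as in (\ref{23}),
\[
\operatorname{div}(Y)=n(R-\rho)+\beta|Y|^{2}.
\]
Under hypothesis (i) both summands on the right are nonnegative, so $\operatorname{div}(Y)\geq 0$ on $M$; under hypothesis (ii) both are nonpositive, so $\operatorname{div}(Y)\leq 0$ on $M$. Hence in either case $\operatorname{div}(Y)$ has a fixed sign, and, moreover, the two summands share that sign.

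The second step is to prove $\operatorname{div}(Y)\equiv 0$. Since $Y$ has finite global norm we have $\int_M|Y|^{2}<\infty$, and since $\operatorname{vol}(M)<\infty$, Cauchy--Schwarz gives $|Y|\in L^{1}(M)$. Applying the divergence theorem to the compactly supported field $\eta_\tau Y$ and expanding $\operatorname{div}(\eta_\tau Y)=\eta_\tau\operatorname{div}(Y)+g(d\eta_\tau,Y)$ yields
\[
\int_M\eta_\tau\operatorname{div}(Y)=-\int_M g(d\eta_\tau,Y),
\]
and using $|d\eta_\tau|\leq K/\tau$ together with $\operatorname{supp}d\eta_\tau\subset\mathscr{B}_{2\tau}\setminus\mathscr{B}_\tau$ the right-hand side is bounded in absolute value by $\frac{K}{\tau}\int_{\mathscr{B}_{2\tau}\setminus\mathscr{B}_\tau}|Y|\leq\frac{K}{\tau}\,\|\,|Y|\,\|_{L^{1}(M)}\to 0$ as $\tau\to\infty$. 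Taking, say, case (i) so that $\eta_\tau\operatorname{div}(Y)\geq 0$, Fatou's lemma gives $\int_M\operatorname{div}(Y)\leq\liminf_{\tau\to\infty}\int_M\eta_\tau\operatorname{div}(Y)=0$, which together with $\operatorname{div}(Y)\geq 0$ forces $\operatorname{div}(Y)\equiv 0$; case (ii) is identical after a sign change.

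Finally I would unwind the consequences. From $n(R-\rho)+\beta|Y|^{2}\equiv 0$ and the fact that (in each of the two cases) the two summands have the same sign, both vanish identically: $R\equiv\rho$, so $R$ is constant, and $\beta|Y|^{2}\equiv 0$. The latter forces $\beta\,Y^{\flat}\otimes Y^{\flat}\equiv 0$ pointwise, since on $\{Y=0\}$ it is trivial and on $\{Y\neq 0\}$ one has $|Y|^{2}>0$, hence $\beta=0$ there. Substituting back into (\ref{y1}) leaves $\pounds_Y g=2(R-\rho)g$, which is precisely the Yamabe soliton equation, so $(M,g,Y,\beta)$ is a Yamabe soliton of constant scalar curvature $R=\rho$.

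I expect the delicate point to be the vanishing step $\operatorname{div}(Y)\equiv 0$: one cannot simply integrate $\operatorname{div}(Y)$ over the noncompact $M$ and invoke Stokes, nor is $\operatorname{div}(Y)$ assumed $L^{1}$ a priori, which is exactly why the argument must be routed through the exhaustion $\eta_\tau$ and Fatou's lemma; the finite-volume hypothesis is used precisely to promote $|Y|\in L^{2}$ to $|Y|\in L^{1}$ so that the cut-off error term $-\int_M g(d\eta_\tau,Y)$ actually tends to zero.
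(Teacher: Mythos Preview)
Your proposal is correct and follows essentially the same route as the paper: trace the soliton equation to obtain $\operatorname{div}(Y)=n(R-\rho)+\beta|Y|^{2}$, multiply by the cut-off $\eta_\tau$, integrate, and use finite volume together with $Y\in L^{2}$ to kill the boundary term $\int g(d\eta_\tau,Y)$, then exploit the sign hypothesis to force the integrand to vanish. Your write-up is in fact a bit more careful than the paper's at two points: you invoke Fatou explicitly to pass from $\int_M\eta_\tau\operatorname{div}(Y)\to 0$ to $\int_M\operatorname{div}(Y)=0$, and you correctly deduce $\beta|Y|^{2}\equiv 0$ (hence $\beta\,Y^{\flat}\otimes Y^{\flat}\equiv 0$) rather than the slightly stronger $\beta\equiv 0$, which is all that is needed for the Yamabe-soliton conclusion.
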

\begin{proof}
For any positive $\tau$, we obtain
\begin{eqnarray}
\nonumber\frac{1}{\tau}\int_{\mathscr{B}_{2\tau}}|Y|dv & \leq & \Big(\int_{\mathscr{B}_{2\tau}}\langle Y,Y\rangle dv \Big)^{1/2} \Big(\int_{\mathscr{B}_{2\tau}}\Big(\frac{1}{\tau} \Big)^2 dv\Big)^{1/2}\\
\nonumber&\leq& \|Y\|_{\mathscr{B}_{2\tau}}\frac{1}{\tau}\Big(V(M)\Big)^{1/2},
\end{eqnarray}
where $V(M)$ represents the volume of $M$. Therefore, we get
\begin{equation*}
\liminf_{\ \tau\rightarrow \infty} \frac{1}{\tau}\int_{\mathscr{B}_{2\tau}}|Y|dv=0.
\end{equation*}
Again, we have
\begin{equation*}
\frac{C}{\tau}\int_{\mathscr{B}_{2\tau}}|Y|dv\geq\Big|\int_{\mathscr{B}_{2\tau}}\eta_\tau div Y dv \Big|.
\end{equation*}
Now with the help of quasi Yamabe soliton equation $(\ref{y1})$ we get
\begin{equation*}
\int_M \{(R-\rho)n+\beta |Y|^2\}dv=0,
\end{equation*}
The above relation together with $(i)$ or $(ii)$ gives $R=\rho$ and $\beta=0$. Which completes the proof.
\end{proof}

\section*{Acknowledgment}
 The second author gratefully acknowledges to the
 CSIR(File No.:09/025(0282)/2019-EMR-I), Govt. of India for the award of JRF.

\end{document}